\newtheorem{theorem}{Theorem}
\newtheorem{lemma}[theorem]{Lemma}
\newtheorem{fact}[theorem]{Fact}
\theoremstyle{definition}
\newtheorem{problem}{Problem}
\newcommand{\cgF}{\mathcal{F}}
\newcommand{\cgI}{\mathcal{I}}
\newcommand{\cgP}{\mathcal{P}}
\let\leq\leqslant
\let\geq\geqslant
\let\setminus\smallsetminus
\let\epsilon\varepsilon
\def\NN{\mathbb{N}}
\def\RR{\mathbb{R}}
\DeclareMathOperator{\interior}{int}
\renewenvironment{enumerate}{\begin{enumorig}[label=\textup{(\roman*)}, noitemsep, topsep=1.5mm plus 1.5mm, leftmargin=*, widest=iii]}{\end{enumorig}}
\let\old@setaddresses\@setaddresses
\def\@setaddresses{\bigskip\bgroup\parindent 0pt\let\scshape\relax\old@setaddresses\egroup}
\title{Triangle-free geometric intersection graphs with~large~chromatic number}
\author[A. Pawlik\and J. Kozik\and T. Krawczyk\and M. Laso\'n\and P. Micek\and W. T. Trotter\and B. Walczak]{Arkadiusz Pawlik\and Jakub Kozik\and Tomasz Krawczyk\and Micha\l{} Laso\'n\and Piotr Micek\and William T. Trotter\and Bartosz Walczak}
\thanks{A journal version of this paper appeared in \emph{Discrete Comput.\ Geom.}, 50(3):714--726, 2013.}
\thanks{Five authors were supported by Ministry of Science and Higher Education of Poland grant 884/N-ESF-EuroGIGA/10/2011/0 within ESF EuroGIGA project GraDR\@.}
\address[Arkadiusz Pawlik, Jakub Kozik, Tomasz Krawczyk, Piotr Micek]{Theoretical Computer Science Department, Faculty of Mathematics and Computer Science, Jagiellonian University, Krak\'ow, Poland}
\email{\mtt\{pawlik,jkozik,krawczyk,micek\mtt\}@tcs.uj.edu.pl}
\address[Micha\l{} Laso\'n]{Theoretical Computer Science Department, Faculty of Mathematics and Computer Science, Jagiellonian University, Krak\'ow, Poland; Institute of Mathematics of the Polish Academy of Sciences, Warsaw, Poland}
\email{michalason@gmail.com}
\address[William T. Trotter]{School of Mathematics, Georgia Institute of Technology, Atlanta, GA 30332, USA}
\email{trotter@math.gatech.edu}
\address[Bartosz Walczak]{Theoretical Computer Science Department, Faculty of Mathematics and Computer Science, Jagiellonian University, Krak\'ow, Poland; \'Ecole Polytechnique F\'ed\'erale de Lausanne, Switzerland}
\email{walczak@tcs.uj.edu.pl}
\begin{document}

\begin{abstract}
Several classical constructions illustrate the fact that the chromatic number of a graph can be arbitrarily large compared to its clique number.
However, until very recently, no such construction was known for intersection graphs of geometric objects in the plane.
We provide a general construction that for any arc-connected compact set $X$ in $\RR^2$ that is not an axis-aligned rectangle and for any positive integer $k$ produces a family $\cgF$ of sets, each obtained by an independent horizontal and vertical scaling and translation of $X$, such that no three sets in $\cgF$ pairwise intersect and $\chi(\cgF)>k$.
This provides a negative answer to a question of Gy\'arf\'as and Lehel for L-shapes.
With extra conditions, we also show how to construct a triangle-free family of homothetic (uniformly scaled) copies of a set with arbitrarily large chromatic number.
This applies to many common shapes, like circles, square boundaries, and equilateral L-shapes.
Additionally, we reveal a surprising connection between coloring geometric objects in the plane and on-line coloring of intervals on the line.
\end{abstract}

\maketitle

\section{Introduction}

A \emph{proper coloring} of a graph is an assignment of colors to the vertices of the graph such that no two adjacent ones are assigned the same color.
The minimum number of colors sufficient to color a graph $G$ properly is called the \emph{chromatic number} of $G$ and denoted by $\chi(G)$.
The \emph{clique number} of $G$, denoted by $\omega(G)$, is the maximum size of a set of pairwise adjacent vertices (a clique) in $G$.
A graph is \emph{triangle-free} if it contains no clique of size $3$.

It is clear that $\chi(G)\geq\omega(G)$.
On the one hand, the chromatic and clique numbers of a graph can be arbitrarily far apart.
There are various constructions of graphs that are triangle-free and still have arbitrarily large chromatic number.
The first one was given in 1949 by Zykov \cite{Zyk49}, and the one perhaps best known is due to Mycielski \cite{Myc55}.
On the other hand, in many important classes of graphs the chromatic number is bounded in terms of the clique number.
Graphs $G$ for which this bound is as tight as possible, that is, $\chi(H)=\omega(H)$ holds for every induced subgraph $H$ of $G$, are called \emph{perfect}.
They include bipartite graphs, split graphs, chordal graphs, interval graphs, comparability graphs, etc.
A class of graphs is \emph{$\chi$-bounded} if there is a function $f\colon\NN\to\NN$ such that $\chi(G)\leq f(\omega(G))$ holds for any graph $G$ from the class\footnotemark.
\footnotetext{This notion has been introduced by Gy\'arf\'as \cite{Gya87}, who called the class \emph{$\chi$-bound} and the function \emph{$\chi$-binding}.
However, the term \emph{$\chi$-bounded} seems to be better established in the modern terminology.}
In particular, the class of perfect graphs is $\chi$-bounded by the identity function.

In this paper, we focus on the relation between the chromatic number and the clique number for classes of graphs arising from geometry.
The \emph{intersection graph} of a family of sets $\cgF$ is the graph with vertex set $\cgF$ and edge set consisting of pairs of intersecting elements of $\cgF$.
We consider families $\cgF$ consisting of arc-connected compact subsets of $\RR^d$ (mostly $\RR^2$) of a common kind (segments, curves, polygons, etc.).
For simplicity, we identify the family $\cgF$ with its intersection graph.

In the simple one-dimensional case of subsets of $\RR$, the only connected sets are intervals.
They define the class of \emph{interval graphs}, which are well known to be perfect.
The situation becomes much more complex in higher dimensions.

The study of the chromatic number of intersection graphs of geometric objects in $\RR^2$ was initiated in the seminal paper of Asplund and Gr\"unbaum \cite{AG60}, where they proved that the families of axis-aligned rectangles are $\chi$-bounded.
Specifically, they proved that every family $\cgF$ of axis-aligned rectangles in the plane satisfies $\chi(\cgF)\leq 4\omega(\cgF)^2-3\omega(\cgF)$.
This was later improved by Hendler \cite{Hen98} to $\chi(\cgF)\leq 3\omega(\cgF)^2-2\omega(\cgF)-1$.
No construction of families of rectangles with $\chi$ superlinear in terms of $\omega$ is known.
On the other hand, Burling \cite{Bur65} showed that triangle-free intersection graphs of axis-aligned boxes in $\RR^3$ can have arbitrarily large chromatic number.

Gy\'arf\'as \cite{Gya85,Gya86} proved $\chi$-boundedness of \emph{overlap graphs}, that is, graphs admitting a representation by closed intervals on the line such that the edges correspond to the pairs of intervals that intersect but are not nested.
Alternatively, the same class of graphs can be defined as the intersection graphs of families of chords of a circle.
Gy\'arf\'as's proof yields the bound $\chi(\cgF)\leq 2^{\omega(\cgF)}(2^{\omega(\cgF)}-2)\omega(\cgF)^2$ for any such family $\cgF$.
This was improved and generalized by Kostochka and Kratochv\'{\i}l \cite{KK97}, who showed that every family $\cgF$ of convex polygons inscribed in a circle satisfies $\chi(\cgF)<2^{\omega(\cgF)+6}$.

Paul Erd\H{o}s asked in the 1970s\footnote{An approximate date confirmed in personal communication with Andr\'as Gy\'arf\'as and J\'anos Pach; see also \cite[Problem 1.9]{Gya87} and \cite[Problem 2 in Section 9.6]{BMP-book}.} whether the class of intersection graphs of line segments in the plane is $\chi$-bounded, or more specifically, whether the chromatic number of triangle-free intersection graphs of line segments is bounded by an absolute constant.
In a previous paper \cite{PKK+14}, we show that the answer is negative.
Namely, for every positive integer $k$, we construct a family $\cgF$ of line segments in the plane with no three pairwise intersecting segments and such that $\chi(\cgF)>k$.

In this paper, we generalize that construction to a wide class of families of sets in the Euclidean plane.
Let $X$ be an arc-connected compact set in $\RR^2$ that is not an axis-aligned rectangle.
For every positive integer $k$, we present a triangle-free family $\cgF$ of sets, each obtained by translation and independent horizontal and vertical scaling of $X$, such that $\chi(\cgF)>k$.

This applies to a wide range of geometric shapes like axis-aligned ellipses, rhombuses, rectangular frames, cross-shapes, L-shapes, etc.
For some shapes (e.g.\ circles and square boundaries), it is even possible to restrict the allowed transformations to uniform scaling and translation.
This contrasts with the result of Kim, Kostochka and Nakprasit \cite{KKN04} that every family $\cgF$ of homothetic (uniformly scaled) copies of a fixed convex compact set in the plane satisfies $\chi(\cgF)\leq 6\omega(\cgF)-6$.

Our result also gives a negative answer to the question of Gy\'arf\'as and Lehel \cite{GL85} whether the families of axis-aligned L-shapes, that is, shapes consisting of a horizontal and a vertical segments of arbitrary lengths forming the letter `L', are $\chi$-bounded.
We prove that this is not the case even for equilateral L-shapes.
However, we are unable to classify all arc-connected compact sets whose families of homothets are $\chi$-bounded.
This is discussed in Section \ref{sec:homothetic}.

There is some evidence that unrestricted scaling is the key property necessary to make the chromatic number large while keeping the clique number small.
For instance, Suk \cite{Suk14} proved that families of unit-length segments are $\chi$-bounded.
This result easily generalizes to the case that the ratio between the maximum and minimum lengths of segments in the family is bounded.

In Section \ref{sec:online}, we discuss how bounding the chromatic number of families of geometric objects in the plane is related to a specific on-line coloring problem for overlap graphs.
In particular, we give an alternative presentation of the construction of triangle-free families of \emph{rectangular frames} (that is, boundaries of axis-aligned rectangles) with arbitrarily large chromatic number.

\section{Translation and independent horizontal and vertical scaling}\label{sec:independent-scaling}

\begin{theorem}\label{thm:independent-scaling}
For every arc-connected compact set\/ $X\subset\RR^2$ that is not an axis-aligned rectangle and every integer\/ $k\geq 1$, there is a triangle-free family\/ $\cgF$ of sets in the plane, each obtained by translation and independent horizontal and vertical scaling of\/ $X$, such that\/ $\chi(\cgF)>k$.
\end{theorem}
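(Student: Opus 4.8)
The plan is to generalize the line-segment construction of \cite{PKK+14}, isolating the single place where the geometry of segments was used and replacing it with a gadget built from the hypothesis that $X$ is not an axis-aligned rectangle. First I would set up a recursive construction producing, for each $\xi$, a triangle-free family $\cgF_\xi$ of independently scaled and translated copies of $X$ together with a finite set $\cgP_\xi$ of \emph{probes}. A probe should be thought of as a small axis-aligned region carrying a prescribed ``direction'' into which one further copy of $X$ can be inserted so that it crosses exactly a designated subfamily of $\cgF_\xi$ and meets nothing else. The invariant I would maintain is a coloring-forcing statement: in every proper coloring of $\cgF_\xi$, each probe sees at least $\xi$ distinct colors among the copies that a copy inserted at that probe would cross. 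Granting this for $\cgF_{k+1}$, every proper coloring uses at least $k+1>k$ colors, so $\chi(\cgF_{k+1})>k$ and the theorem follows.

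For the inductive step I would build $\cgF_{\xi+1}$ from $\cgF_\xi$ by a doubling-type operation. Start from one base copy of $\cgF_\xi$; inside the region of each probe $P\in\cgP_\xi$ place a second, geometrically shrunken copy of the whole pair $(\cgF_\xi,\cgP_\xi)$, scaled small enough to sit entirely inside $P$ and disjoint from everything outside $P$, and then add one new copy $X_P$ of $X$, inserted through $P$, that crosses the subfamily designated by $P$ in the base copy while meeting the shrunken inner copy only in a controlled way. The new probe set $\cgP_{\xi+1}$ is read off from the probes of the inner copies. The invariant is preserved because, in any proper coloring, the inner copy already forces $\xi$ colors through each of its probes, while $X_P$ must additionally avoid the color of the base subfamily it crosses, yielding $\xi+1$ forced colors. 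Throughout I must verify triangle-freeness: it suffices that the subfamily designated by each probe is pairwise disjoint and that $X_P$ sits in general position, so that no three inserted objects share a common point.

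The only genuinely new ingredient is the probe/crossing gadget, and this is where the hypothesis enters. Let $B$ be the axis-aligned bounding box of $X$. Since $X$ is compact and arc-connected, $X$ meets each of the four sides of $B$, yet $X\neq B$ because $X$ is not an axis-aligned rectangle. I would use this to find a point $p\in X$ and a small axis-aligned window around it in which $X$ exhibits a corner- or slant-like feature that no axis-aligned rectangle possesses. Normalizing by the group of admissible maps (independent horizontal and vertical scalings followed by a translation), I would place this feature at a standard location and establish two operations: a suitable independent rescaling turns a copy of $X$ into an arbitrarily thin, near-horizontal or near-vertical sliver that crosses a prescribed region transversally, and two such copies can be positioned to meet in a single, controllable contact while staying disjoint elsewhere. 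These are exactly the operations the recursion consumes, with a copy of $X$ playing the role that a line segment played in \cite{PKK+14}.

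The main obstacle is this geometric extraction, which must cover all non-rectangles uniformly. It is genuinely delicate: the boundary of $X$ need not contain any non-axis-parallel arc at all --- an axis-aligned L-shape has an entirely horizontal-and-vertical boundary yet is not a rectangle --- so the distinguishing feature one must exploit (here a re-entrant corner, elsewhere a slanted or curved portion) depends on the shape, and $X$ may moreover be a wild compactum with an oscillating boundary, ruling out any appeal to smoothness. On top of this, the single-point contacts and the disjointness of designated subfamilies must survive the geometrically shrinking scales used at deeper recursion levels, where copies are rescaled by unbounded independent horizontal and vertical factors that distort the shape; one must guarantee that controlled contacts never degenerate into fat overlaps and that no three copies ever share a point, so that triangle-freeness is preserved throughout. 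I would expect most of the work to go into a clean lemma isolating the point $p$ and the two admissible scaling regimes, after which the doubling construction and the coloring-forcing induction adapt faithfully from the segment case.
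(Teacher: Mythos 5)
Your inductive scaffolding matches the paper's: a recursively built family with probes, an inner copy nested inside each probe, a new copy of $X$ (the paper calls it a \emph{diagonal}) crossing exactly the sets designated by a probe, and the case analysis ``same color sets $\Rightarrow$ diagonal forces a new color, different color sets $\Rightarrow$ their union forces $k$ colors.'' But the one ingredient you defer --- the probe/crossing gadget --- is precisely the theorem's content, and the route you sketch for it is the wrong one. You look for a feature \emph{of $X$ itself} (a ``corner- or slant-like feature'' at a point $p\in X$) to build thin slivers meeting in ``a single, controllable contact'' in ``general position.'' As you yourself observe, this cannot work uniformly: an axis-aligned L-shape has no slanted or curved boundary portion, and a wild compactum has no usable local structure at all. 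The paper's key idea is to exploit not a feature of $X$ but a feature of its \emph{complement}: since $X$ is compact and is not its own bounding box $U$, the set $(\interior U)\setminus X$ is non-empty and open, so there is an axis-aligned \emph{empty rectangle} $E\subset\interior U$ disjoint from $X$. Arc-connectedness then yields a \emph{left stabber} (a curve of $X$ stabbing horizontally the part of $U$ left of $E$) and, after possibly replacing $X$ by its horizontal reflection, a \emph{right stabber} (a curve stabbing vertically the strip right of $E$). All forced intersections come from the purely topological fact that a curve stabbing a rectangle vertically must meet a curve stabbing it horizontally (Fact~\ref{fact:curve-cross}); the empty rectangle $E$, suitably stretched, is where probe roots live and where inner copies are nested disjointly from everything else.

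This also dissolves the difficulties you flag at the end, which are artifacts of your approach rather than of the problem. No single-point contacts, transversality, or genericity are needed: any non-empty intersection suffices for the intersection graph, and triangle-freeness holds because each diagonal's neighborhood (the sets stabbing one probe) is by definition an independent set and distinct diagonals are kept disjoint by placing inner families inside disjoint probe roots --- containment and stabbing, not metric control, do all the work, so nothing ``degenerates'' under the unbounded independent rescalings. Two further corrections: your invariant quantifies over \emph{every} probe (``each probe sees at least $\xi$ colors''), but the construction can only guarantee this for \emph{some} probe, since the case analysis designates either the lower probe $L_{P,Q}$ or the upper probe $U_{P,Q}$ depending on the adversarial coloring; the existential version is what the paper's Lemma~\ref{lem:other-induction} states and is what the induction actually supports. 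And you should note where non-rectangularity is genuinely used: if $(\interior U)\setminus X$ were empty, compactness would give $X\supseteq\overline{\interior U}=U$, hence $X=U$, an axis-aligned rectangle --- this, not any boundary feature, is the dichotomy the hypothesis provides.
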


Define the \emph{bounding box} of a non-empty bounded set (family of sets) in the plane to be the smallest axis-aligned rectangle that contains it (all the sets in the family).
We say that a curve \emph{stabs} an axis-aligned rectangle \emph{horizontally} (\emph{vertically}) if it is contained in that rectangle and connects its left and right (top and bottom, respectively) boundaries.
We need the following simple properties of stabbing curves.

\begin{fact}\label{fact:curve-cut}
Let\/ $R$ and\/ $S$ be two axis-aligned rectangles such that\/ $R\subset S$ and\/ $R$ touches the left and right (top and bottom) sides of\/ $S$.
If a curve\/ $c$ stabs\/ $S$ vertically (horizontally), then there is a curve\/ $c'\subset c$ that stabs\/ $R$ vertically (horizontally).
\end{fact}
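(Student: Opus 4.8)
The plan is to reduce the statement to a one-dimensional crossing argument, exploiting the fact that $R$ spans the full width of $S$. I would fix coordinates and write $S=[x_1,x_2]\times[y_1,y_2]$; since $R\subset S$ touches the left and right sides of $S$, it must have the form $R=[x_1,x_2]\times[y_1',y_2']$ with $y_1\leq y_1'\leq y_2'\leq y_2$. I would parametrize the stabbing curve as a continuous map $\gamma\colon[0,1]\to S$ with, say, $\gamma(0)$ on the bottom side of $S$ and $\gamma(1)$ on the top side, and denote by $y(t)$ the vertical coordinate of $\gamma(t)$, a continuous function with $y(0)=y_1$ and $y(1)=y_2$. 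The crucial observation is that the horizontal coordinate of every point of $\gamma$ already lies in $[x_1,x_2]$, so membership in $R$ is equivalent to the single condition $y_1'\leq y(t)\leq y_2'$.

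Next I would isolate one clean upward crossing of the horizontal band $[y_1',y_2']$ by the curve. Set $b=\inf\{t\in[0,1]:y(t)\geq y_2'\}$; this set is nonempty because $y(1)=y_2\geq y_2'$ and closed by continuity of $y$, so $y(b)\geq y_2'$, while $y(t)<y_2'$ for all $t<b$ forces $y(b)\leq y_2'$, hence $y(b)=y_2'$. Then set $a=\sup\{t\leq b:y(t)\leq y_1'\}$; this set contains $0$ since $y(0)=y_1\leq y_1'$, and the same closedness-and-continuity reasoning yields $y(a)=y_1'$. For $t$ strictly between $a$ and $b$ the definitions give $y_1'<y(t)<y_2'$, so the entire restriction $\gamma|_{[a,b]}$ stays within the band, with its two endpoints landing exactly on the bottom and top sides of $R$. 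Therefore $c'=\gamma([a,b])$ is a curve $c'\subset c$ that stabs $R$ vertically, as required.

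The horizontal version is obtained verbatim by exchanging the roles of the two coordinates, so it needs no separate treatment. I expect the only delicate point to be verifying that the chosen subcurve never leaves the band \emph{between} its endpoints: taking $b$ as the \emph{first} time the curve reaches the top of $R$ and $a$ as the \emph{last} time before $b$ that it touches the bottom of $R$ is exactly what rules out spurious excursions, and the continuity-plus-closedness bookkeeping is what pins the endpoints to the two sides of $R$ rather than to its interior. The borderline case $y_1'=y_2'$ (a degenerate $R$) is covered by the same formulas with $a=b$, so no special argument is needed.
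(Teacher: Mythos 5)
Your proof is correct. The paper states this fact without proof, treating it as a simple property of stabbing curves, and your first-crossing argument -- reducing to the one coordinate $y(t)$ (legitimate because $R$ spans the full width of $S$), taking $b$ as the first time the curve reaches height $y_2'$ and $a$ as the last time at or below $y_1'$ before $b$, and pinning down $y(a)=y_1'$, $y(b)=y_2'$ by continuity and closedness -- is exactly the standard intermediate-value bookkeeping the authors implicitly rely on, including the correct handling of the degenerate case $y_1'=y_2'$.
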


\begin{fact}\label{fact:curve-cross}
Let\/ $R$ be an axis-aligned rectangle, $c_V$ be a curve that stabs\/ $R$ vertically, and\/ $c_H$ be a curve that stabs\/ $R$ horizontally.
Then\/ $c_V\cap c_H\neq\emptyset$.
\end{fact}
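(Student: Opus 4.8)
The plan is to normalize and then reduce the claim to a standard fixed-point (equivalently, Jordan-curve) statement about the unit square. First I would apply an affine map taking $R$ to the unit square $[0,1]^2$; since such a map is a homeomorphism, it preserves the stabbing conditions and the (non)emptiness of $c_V\cap c_H$, so I may assume $R=[0,1]^2$. Because $c_V$ is arc-connected, contained in $R$, and meets both the bottom edge $y=0$ and the top edge $y=1$, it contains an arc joining a point of the bottom edge to a point of the top edge; let $\gamma_V=(\gamma_{V,1},\gamma_{V,2})\colon[0,1]\to c_V$ be a continuous parametrization of such an arc with $\gamma_{V,2}(0)=0$ and $\gamma_{V,2}(1)=1$. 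Symmetrically I would extract $\gamma_H=(\gamma_{H,1},\gamma_{H,2})\colon[0,1]\to c_H$ with $\gamma_{H,1}(0)=0$ and $\gamma_{H,1}(1)=1$. It then suffices to produce $s,t\in[0,1]$ with $\gamma_V(s)=\gamma_H(t)$, since any such common point lies in $c_V\cap c_H$.

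For the core of the argument I would invoke the Poincar\'e--Miranda theorem (a multidimensional intermediate value theorem, equivalent to Brouwer's fixed-point theorem). Define $F=(F_1,F_2)\colon[0,1]^2\to\RR^2$ by
\[
F(s,t)=\bigl(\gamma_{V,2}(s)-\gamma_{H,2}(t),\ \gamma_{H,1}(t)-\gamma_{V,1}(s)\bigr).
\]
This map is continuous, and I would check the sign pattern on the four faces of the square: since all coordinate functions take values in $[0,1]$, on the face $s=0$ one has $F_1=-\gamma_{H,2}(t)\le 0$ and on $s=1$ one has $F_1=1-\gamma_{H,2}(t)\ge 0$; likewise on $t=0$ one has $F_2=-\gamma_{V,1}(s)\le 0$ and on $t=1$ one has $F_2=1-\gamma_{V,1}(s)\ge 0$. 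This is exactly the hypothesis of Poincar\'e--Miranda, with the first component controlling the $s$-faces and the second the $t$-faces, so $F$ has a zero $(s^\ast,t^\ast)$. Reading off the two coordinates gives $\gamma_{V,2}(s^\ast)=\gamma_{H,2}(t^\ast)$ and $\gamma_{V,1}(s^\ast)=\gamma_{H,1}(t^\ast)$, that is $\gamma_V(s^\ast)=\gamma_H(t^\ast)$, which is the required intersection point.

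The sign bookkeeping above is routine; the genuine content is entirely topological and is concentrated in the appeal to Poincar\'e--Miranda. I expect this to be the ``main obstacle'' only in the sense that the fact cannot be proved by elementary combinatorics alone---some topological input of Brouwer/Jordan strength is unavoidable. An alternative route, perhaps more in keeping with a geometry paper, would first pass to simple sub-arcs (using arc-connectedness) and then apply the Jordan curve theorem: complete $\gamma_V$ to a bi-infinite simple arc by attaching vertical rays below its bottom endpoint and above its top endpoint, observe that this properly embedded line separates the plane into a \emph{left} and a \emph{right} component, and argue that the two endpoints of $\gamma_H$, lying on the left and right edges of $R$, fall into different components, forcing $\gamma_H$ to meet the line inside $R$. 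The care needed to handle possible boundary contact between $\gamma_V$ and the sides of $R$ makes this second approach more delicate, which is why I would prefer the Poincar\'e--Miranda formulation.
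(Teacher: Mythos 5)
Your proof is correct. Note, though, that the paper itself offers no proof of this fact at all: Facts \ref{fact:curve-cut} and \ref{fact:curve-cross} are stated as ``simple properties of stabbing curves'' and left to the reader, so there is no argument to compare yours against step by step. What you have done is supply the standard rigorous justification of this folklore crossing lemma. The normalization to $[0,1]^2$ is harmless, the extraction of parametrized arcs $\gamma_V$, $\gamma_H$ is legitimate (a curve in the paper's sense is a continuous image of an interval, hence path-connected, so a subpath from a bottom point to a top point exists), and the sign bookkeeping for $F(s,t)=\bigl(\gamma_{V,2}(s)-\gamma_{H,2}(t),\ \gamma_{H,1}(t)-\gamma_{V,1}(s)\bigr)$ checks out: $F_1\le 0$ on $s=0$ and $F_1\ge 0$ on $s=1$ because both curves lie in the unit square, and symmetrically for $F_2$ on the $t$-faces, which is exactly the Poincar\'e--Miranda hypothesis. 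You are also right on the conceptual point: this statement is not elementary --- it is well known to be equivalent in strength to the two-dimensional Brouwer fixed-point theorem (the crossing lemma for paths in a square is a standard intermediate step in proofs of Brouwer and of the Jordan curve theorem), so some topological input of that calibre is unavoidable, and the paper's decision to cite it as a ``fact'' implicitly leans on exactly this folklore. Your preference for Poincar\'e--Miranda over the Jordan-curve route is sound: the latter requires passing to simple sub-arcs and fussing over possible boundary contact of $c_V$ with the sides of $R$, none of which arises in the degree-theoretic formulation, and Poincar\'e--Miranda needs no injectivity of the parametrizations at all.
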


\begin{figure}[t]
\begin{center}
\includegraphics[scale=.55]{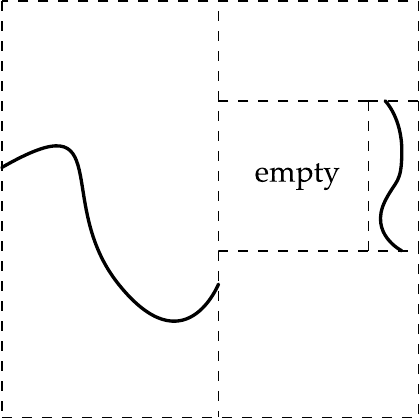}
\end{center}
\caption{Empty rectangle, left and right stabbers}
\label{fig:property}
\end{figure}

To prove Theorem \ref{thm:independent-scaling}, we use the following properties of the set $X$ or its horizontal reflection (see Figure~\ref{fig:property}):
\begin{enumerate}
\item\label{is-cond:i} $X$ is bounded.
Let $U$ be the bounding box of $X$.
\item\label{is-cond:ii} There is an axis-aligned rectangle $E$ contained in the interior of $U$ and disjoint from $X$.
We fix any such $E$ and call it the \emph{empty rectangle} of $X$.
\item\label{is-cond:iii}
Let $V_L$ be the rectangle, situated to the left of $E$, defined by the top, bottom, and left edges of $U$ and the line through the left edge of $E$.
There is a curve in $X$ that stabs $V_L$ horizontally.
We fix any such curve and call it the \emph{left stabber} of $X$.
\item\label{is-cond:iv}
Let $V_R$ be the rectangle, situated to the right of $E$, defined by lines through the top, bottom, and right edges of $E$ and the right edge of $U$.
There is a curve in $X$ that stabs $V_R$ vertically.
We fix any such curve and call it the \emph{right stabber} of $X$.
\end{enumerate}
The condition \ref{is-cond:i} follows from the assumption that $X$ is compact; \ref{is-cond:ii} is a consequence of the fact that $(\interior U)\setminus X$ is non-empty and open; \ref{is-cond:iii} can be proved quickly as follows.
There must be a curve $c\subset X$ that stabs $U$ horizontally, because $U$ is compact and arc-connected.
Thus there is a curve that stabs $V_L$ horizontally, by Fact \ref{fact:curve-cut}.
The condition \ref{is-cond:iv} must be true either for $X$ or the horizontal reflection of $X$.
This can be proved as follows.
There must be a curve $c\subset X$ that stabs $U$ vertically.
By Fact \ref{fact:curve-cut}, there is a curve $c'\subset c$ that stabs vertically the rectangle defined by the horizontal sides of $E$ and vertical sides of $U$.
Because $c'$ does not touch the empty rectangle and is arc-connected, it is a valid right stabber of $X$ or its horizontal reflection is a valid right stabber of the horizontal reflection of $X$.
We can assume without loss of generality that condition \ref{is-cond:iv} is true for $X$, as otherwise the horizontal reflection of $X$ would satisfy conditions \ref{is-cond:i}--\ref{is-cond:iv}.

For convenience, we use the term \emph{transformed copy} of $X$ to denote any set obtained from $X$ by independent horizontal and vertical scaling and translation.
Note that we will use only positive scale factors (that is, we will not need reflected copies of $X$).

Let $\cgF$ be a family of transformed copies of $X$ and let $R$ be the bounding box of $\cgF$.
An axis-aligned rectangle $P$ is a \emph{probe} for $\cgF$ if the following conditions are satisfied:
\begin{enumerate}
\item $P$ is contained in $R$ and touches the right side of $R$.
\item The sets in $\cgF$ that intersect $P$ are pairwise disjoint.
\item\label{probe-cond:iii} Every set in $\cgF$ that intersects $P$ contains a curve that stabs $P$ vertically.
\item There is a vertical line that cuts $P$ into two rectangles the left of which is disjoint from all the sets in $\cgF$.
For each probe, we fix any such vertical line and call the left of the two rectangles the \emph{root} of $P$.
\end{enumerate}

We define sequences $(s_i)_{i\in\NN}$ and $(p_i)_{i\in\NN}$ by induction, setting $s_1=p_1=1$, $s_{i+1}=(p_i+1)\cdot s_i+p_i^2$, and $p_{i+1}=2p_i^2$.
Now we are ready to state and prove the following lemma, which immediately implies Theorem \ref{thm:independent-scaling}.

\begin{lemma}\label{lem:other-induction}
For every integer\/ $k\geq 1$, there is a triangle-free family\/ $\cgF(k)$ of\/ $s_k$ transformed copies of\/ $X$ and a family\/ $\cgP(k)$ of\/ $p_k$ pairwise disjoint probes for\/ $\cgF(k)$ such that every proper coloring of\/ $\cgF(k)$ uses at least\/ $k$ colors on the members of\/ $\cgF(k)$ intersecting some probe in\/~$\cgP(k)$.
\end{lemma}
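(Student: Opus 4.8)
The plan is to prove Lemma~\ref{lem:other-induction} by induction on $k$, maintaining the invariant that in every proper coloring the members of $\cgF(k)$ meeting some probe of $\cgP(k)$ collectively receive at least $k$ distinct colors; this is exactly the statement feeding Theorem~\ref{thm:independent-scaling}. For the base case $k=1$ I would set $\cgF(1)=\{X\}$ and build one probe from the right stabber: condition~\ref{is-cond:iv} supplies a vertical stabber of $V_R$, and extending $V_R$ leftwards into the empty rectangle $E$ (which is disjoint from $X$ by~\ref{is-cond:ii}) produces a probe whose left part lies in $E$ and hence serves as an empty root. The resulting family has one set and one probe, matching $s_1=p_1=1$, and trivially forces one color.

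For the inductive step I would realize, geometrically, a $\chi$-raising Mycielski-type operation. First place one \emph{large} copy of $\cgF(k)$ with probes $Q_1,\dots,Q_{p_k}$. Inside each $Q_j$ place a scaled-down \emph{inner} copy $\cgS_j$ of $\cgF(k)$, with probes $P_{j,1},\dots,P_{j,p_k}$, positioned so that every set of the large copy crossing $Q_j$ still stabs vertically the sub-rectangle carrying $\cgS_j$ (Fact~\ref{fact:curve-cut} lets me thin $Q_j$ to arrange this). Finally, for each pair $(j,l)$ add a \emph{connector}: a transformed copy of $X$ whose left stabber stabs $Q_j$ horizontally and whose right stabber stabs $P_{j,l}$ vertically. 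By Fact~\ref{fact:curve-cross} the connector then meets every large-copy set crossing $Q_j$, while joining the crossers of the inner probe $P_{j,l}$. This uses $s_k+p_k s_k+p_k^2=s_{k+1}$ sets; the family $\cgP(k+1)$ is assembled from the inner probes $P_{j,l}$ (now each carrying its extra connector) together with one auxiliary probe placed to the right of each connector's empty rectangle, giving $2p_k^2=p_{k+1}$ probes.

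The coloring argument is the heart of the matter. Fixing a proper coloring and restricting it to the large copy, the induction hypothesis already forces at least $k$ colors on the large copy's crossers. To gain the extra color I would argue by contradiction in Mycielski style: assuming the members meeting $\cgP(k+1)$ use only $k$ colors, each inner copy $\cgS_j$ (all of whose crossers meet probes of $\cgP(k+1)$) uses exactly those $k$ colors by induction, and every connector, lying on some inner probe $P_{j,l}$, is confined to them as well. Since each connector meets, via Fact~\ref{fact:curve-cross}, all crossers of its $Q_j$, and since the inner copy $\cgS_j$ is itself stabbed by those same crossers, the $k$ colors occurring on the inner crossers can be pushed back to recolor the large copy with fewer than $k$ colors, contradicting the inductive lower bound for the large copy.

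Triangle-freeness I would verify as follows: the large and inner copies are triangle-free by induction and occupy pairwise disjoint regions, so the only new adjacencies are connector--crosser incidences; because the sets meeting any single probe are pairwise disjoint and distinct connectors are kept disjoint, no connector can sit in a triangle. I expect the main obstacle to be the \emph{simultaneous geometric realization}: choosing all the scalings and translations so that each inner copy fits inside its probe while the large crossers keep stabbing it, the connectors' left and right stabbers reach $Q_j$ and $P_{j,l}$ respectively, and all four probe conditions (in particular the empty root) survive --- all while creating exactly the shadow adjacencies the recoloring argument needs \emph{without} forming a triangle. Once this is in place, checking that the recurrences for $s_i$ and $p_i$ account precisely for the sets and probes is routine bookkeeping.
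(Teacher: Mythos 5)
Your high-level architecture matches the paper's --- recurse, plant an inner copy of the previous construction inside each probe of an outer copy, add one extra copy of $X$ per pair of an outer and an inner probe, with the bookkeeping $s_{k+1}=s_k+p_ks_k+p_k^2$ and $p_{k+1}=2p_k^2$ --- but the connector geometry is inverted in a way that breaks both the probe discipline and the color forcing. In the paper, the diagonal lies entirely inside the \emph{root} of the outer probe (empty by probe condition (iv)), and its left stabber stabs the upper half of the \emph{inner} probe horizontally; by Fact~\ref{fact:curve-cross} it therefore intersects exactly the inner crossers of that probe and is disjoint from every outer set. This is what makes the upper probe legal (it meets the diagonal together with the pairwise-disjoint outer crossers, none of which the diagonal touches) and what forces the diagonal's color off the shared palette when the inner and outer color sets coincide. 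Your connector's left stabber instead stabs the \emph{outer} probe $Q_j$, so the connector intersects all outer crossers of $Q_j$; then no rectangle meeting both the connector and those crossers can satisfy probe condition (ii) (pairwise disjointness of crossers), so your auxiliary probe is not a probe at all, while the connector is disjoint from the inner crossers of $P_{j,l}$, so nothing forces its color to be new on the probe that ``carries'' it. Your placement is also internally inconsistent: you ask the outer crossers to stab vertically the sub-rectangle carrying $\cgS_j$, yet your triangle-freeness paragraph asserts the copies occupy disjoint regions. The paper resolves this tension by embedding the inner family in the empty root and letting only the \emph{new probes}, extended all the way to the right side of the new bounding box, re-enter the territory of the outer stabbers via Fact~\ref{fact:curve-cut}; your probes $P_{j,l}$ neither touch that right side (condition (i)) nor meet any outer crosser.

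The coloring argument is the second genuine gap. The negation of the lemma's conclusion is ``every probe in $\cgP(k+1)$ sees at most $k$ colors on its crossers,'' not ``at most $k$ colors appear on all probed members globally,'' so you cannot conclude that each inner copy uses \emph{exactly those} $k$ colors; and the closing move --- pushing the inner colors back to recolor the large copy --- has no mechanism: unlike Mycielski's construction, an inner set's neighborhood does not mirror that of any outer set, and you never explain why the recoloring stays proper. The paper needs no recoloring and no contradiction: it picks an outer probe $P$ whose crossers carry at least $k$ colors, an inner probe $Q$ inside the root of $P$ whose crossers carry at least $k$ colors, and splits into two cases. If the two color sets differ, the \emph{lower} probe ($Q^\downarrow$ extended to the right edge, meeting both crosser families at once) already sees at least $k+1$ colors; if they are equal, the diagonal --- which intersects all inner crossers of $Q$ and whose empty rectangle was stretched clear of the inner family --- must take a fresh color, and the \emph{upper} probe sees it together with the outer crossers. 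Your probe set contains nothing playing the lower probe's role of merging the outer and inner crossing families, so even after correcting the connector the ``different color sets'' case would remain unhandled.
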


\begin{proof}
Set $\cgF(1)=\{X\}$.
Let $P$ be the probe for $\cgF(1)$ obtained by horizontal extension of the empty rectangle $E$ of $X$ to the right side of the bounding box of $X$.
Note that $P$ is stabbed vertically by the right stabber of $X$ and $E$ is a valid root of $P$.
Set $\cgP(1)=\{P\}$.
It is clear that the statement of the lemma holds.

Now, let $k\geq 2$, and assume by induction that $\cgF(k-1)$ and $\cgP(k-1)$ exist and satisfy the conditions of the lemma.
We show how to construct $\cgF(k)$ and $\cgP(k)$.
For each probe $P\in\cgP(k-1)$, split it vertically into two parts $P^\uparrow$ and $P^\downarrow$, with a little vertical margin in-between to make them disjoint.

First, we construct a helper family $\cgF'$ containing $\cgF(k-1)$ and, for each $P\in\cgP(k-1)$, an additional transformed copy $D_P$ of $X$ called the \emph{diagonal}.
The diagonal $D_P$ is placed as follows.
Let $w_1$ be the distance between the left side of the empty rectangle of $X$ and the left side of the bounding box of $X$, and let $w_2$ be the width of the bounding box of $X$.
Take a copy of $X$ transformed in such a way that its bounding rectangle is equal to $P^\uparrow$, and then scale it horizontally by $\frac{2w_2}{w_1}$ keeping the left side of the bounding rectangle fixed.
The resulting set is the diagonal $D_P$.
The chosen scale factor guarantees that the empty rectangle of $D_P$ lies completely to the right of the bounding box of $\cgF(k-1)$.
For an illustration, see Figure \ref{fig:diagonal}.

\begin{figure}[t]
\begin{center}
\includegraphics[scale=.3]{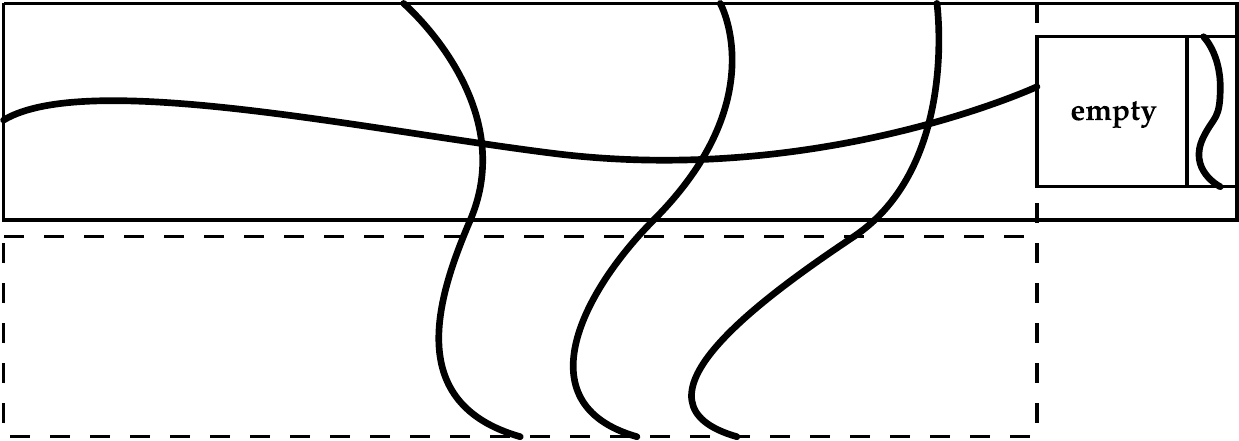}
\end{center}
\caption{The diagonal of a probe $P$ (top) and the rectangle $P^\downarrow$ (bottom) pierced by vertical stabbers of $P$}
\label{fig:diagonal}
\end{figure}

It is easy to see that $D_P$ intersects exactly the same sets in $\cgF(k-1)$ as $P^\uparrow$ does.
Every such set actually contains a curve that stabs $P$ (and thus $P^\uparrow$, by Fact \ref{fact:curve-cut}) vertically, by the probe condition \ref{probe-cond:iii}.
Furthermore, we know that the left stabber of $D_P$ stabs $P^\uparrow$ horizontally, thanks to the chosen scale factor.
By Fact \ref{fact:curve-cross}, the left stabber of $D_P$ intersects every set in $\cgF(k-1)$ that intersects $P^\uparrow$.

The size of $\cgF'$ is $s_{k-1}+p_{k-1}$.

We now construct the family $\cgF(k)$ in two steps.
First, draw a copy $\cgF_{\text{outer}}$ of $\cgF(k-1)$, called the \emph{outer} family, associated with a set of probes $\cgP_{\text{outer}}$, a copy of $\cgP(k-1)$.
Then, for each $P\in\cgP_{\text{outer}}$, place a scaled copy $\cgF_P'$ of $\cgF'$, called an \emph{inner} family, inside the root of $P$.
Note that $\cgF(k)$ is of size $s_{k-1} + p_{k-1}(s_{k-1}+p_{k-1}) = s_k$.

Now we show how the set of probes $\cgP(k)$ is constructed.
For each probe $P\in\cgP_{\text{outer}}$, let $\cgF_P$ be the family obtained from $\cgF_P'$ by removing the diagonals.
Note that $\cgF_P$ is a transformed copy of $\cgF(k-1)$, so let $\cgP_P$ be the associated set of (transformed) probes.
For each probe $P\in\cgP_{\text{outer}}$ and each probe $Q\in\cgP_P$, introduce two probes to $\cgP(k)$:\ the \emph{upper} probe $U_{P,Q}$ and the \emph{lower} probe $L_{P,Q}$.
The upper probe $U_{P,Q}$ is the horizontal extension of the empty rectangle of the diagonal $D_Q$ to the right side of the bounding box of $\cgF(k)$.
The lower probe $L_{P,Q}$ is $Q^\downarrow$ extended horizontally to the right side of the bounding box of $\cgF(k)$.

To see that $U_{P,Q}$ is a probe for $\cgF(k)$, note that $U_{P,Q}$ intersects $D_Q$, all the sets in $\cgF_{\text{outer}}$ pierced by $P$, and nothing else.
First, the empty rectangle of $D_P$ is completely to the right of all sets in $\cgF_P$, so the only set in $\cgF_P'$ intersecting the upper probe is $D_P$ (and the right stabber of $D_P$ stabs $U_{P,Q}$ vertically).
Second, since $U_{P,Q}$ is contained in $P$ and starts in its root, by Fact \ref{fact:curve-cut}, all the sets in $\cgF_{\text{outer}}$ pierced by $P$ contain curves that stab $U_{P,Q}$ vertically.
The inner family was built inside the root of a probe $P$ of the outer family, thus all the sets piercing $U_{P,Q}$ are disjoint.
The empty rectangle of $D_P$ is a valid root of $U_{P,Q}$.

The proof that $L_{P,Q}$ is a probe is analogous.
$L_{P,Q}$ intersects all the sets in $\cgF_{\text{outer}}$ pierced by $P$, all the sets in $\cgF_P$ pierced by $Q$, and nothing else.
In particular, $L_{P,Q}$ is disjoint from $D_Q$.
Since the sets in the inner family are disjoint from the sets in the outer family, all the sets intersecting $L_{P,Q}$ are disjoint.
Furthermore, they contain curves that stab $L_{P,Q}$ vertically as a consequence of Fact \ref{fact:curve-cut}.
The intersection of the root of $Q$ with $L_{P,Q}$ is a valid root of $L_{P,Q}$.

The size of $\cgP(k)$ is $2p_{k-1}^2=p_k$.

The family $\cgF(k)$ is triangle-free, because we constructed it by taking disjoint copies of triangle-free families and adding disjoint diagonals intersecting independent sets.
Let $\phi$ be a proper coloring of $\cgF(k)$.
We show that there is a probe in $\cgP(k)$ for which $\phi$ uses at least $k$ colors on the sets in $\cgF(k)$ intersecting that probe.
Consider the restriction of $\phi$ to the outer family $\cgF_{\text{outer}}$.
There is a probe $P\in\cgP_{\text{outer}}$ such that $\phi$ uses at least $k-1$ colors on the sets in $\cgF_{\text{outer}}$ intersecting $P$.
Now, consider $\cgF_P$, the inner copy of $\cgF(k-1)$ put inside the root of $P$.
Again, there is a probe $Q\in\cgP_P$ such that $\phi$ uses at least $k-1$ colors on the sets in $\cgF_P$ intersecting $Q$.
If $\phi$ uses different sets of colors on the sets in $\cgF_{\text{outer}}$ intersecting $P$ and the sets in $\cgF_P$ intersecting $Q$, then at least $k$ colors are used on the sets pierced by the lower probe $L_{P,Q}$.
Otherwise, if $\phi$ uses the same set of colors on these two families, then another color must be used on the diagonal $D_Q$, and thus $\phi$ uses at least $k$ colors on the sets intersecting the upper probe $U_{P,Q}$.
\end{proof}

The lemma provides only the weak inequality $\chi(\cgF)\geq k$ instead of the strong one required by Theorem \ref{thm:independent-scaling}.
This is on purpose---we can add diagonals to the family $\cgF(k)$ from the statement of the lemma (the same way as $\cgF'$ is constructed in the proof of the lemma) and increase the chromatic number by one.
This way we obtain the smallest family $\cgF$ satisfying Theorem \ref{thm:independent-scaling} that we know of.
Its size is $s_k+p_k$.
The inductive definition of $p_k$ and $s_k$ yields
\begin{equation*}
2^{2^{k-1}-1}=p_k\leq s_k\leq 2^{2^{k-1}}-1,
\end{equation*}
and thus $s_k+p_k=\Theta(2^{2^{k-1}})$.

\section{Uniform scaling and translation}\label{sec:homothetic}

We adapt our construction from the previous section to work with uniform scaling for a base shape that meets additional conditions.
The reader is advised to read the previous section first, as we do not repeat all the details here.

We start with technical conditions capturing the full generality of the sets that admit our construction for uniform scaling.
Let $X$ be a subset of $\RR^2$.
We say that $X$ is \emph{anchored} if it can be affinely transformed to a set $X'$ with the following properties:
\begin{enumerate}
\item\label{us-cond:i}
$X'$ is contained in the square $U=[0,1]\times(0,1)$, called the \emph{bounding square} of $X'$.
Note that $U$ is open at the top and bottom.
\item\label{us-cond:ii}
For every $\epsilon\in(0,1)$, there is a closed square $E(\epsilon)\subset U$ such that $E(\epsilon)$ is disjoint from $X'$, the width $\xi(\epsilon)$ of $E(\epsilon)$ satisfies $(1+\epsilon)\xi(\epsilon)<\epsilon$, and the distance between the right side of $E(\epsilon)$ and the right side of $U$ is equal to $\epsilon\xi(\epsilon)$.
We fix $E(\epsilon)$ and call it the \emph{$\epsilon$-empty square} of $X'$.
\item\label{us-cond:iii}
Let $V_L(\epsilon)$ be the rectangle formed by the top, bottom, and left sides of $U$ and the line through the left side of $E(\epsilon)$.
There is a curve in $X'$ that stabs $V_L(\epsilon)$ horizontally.
We fix any such curve and call it the \emph{left\/ $\epsilon$-stabber} of $X'$.
\item\label{us-cond:iv}
Let $V_R(\epsilon)$ be the rectangle formed by lines through the top, bottom, and right sides of $E(\epsilon)$ and the right side of $U$.
There is a curve in $X'$ that stabs $V_R(\epsilon)$ vertically.
We fix any such curve and call it the \emph{right\/ $\epsilon$-stabber} of $X'$.
\end{enumerate}
Note that the intersection graphs of homothetic copies of $X'$ are exactly the intersection graphs of homothetic copies of $X$.
Therefore, for simplicity, we assume that $X$ itself satisfies \ref{us-cond:i}--\ref{us-cond:iv}.

Examples of anchored sets include squares, circles, half-circles, boundaries of convex sets, and L-shapes.
Figures \ref{fig:anchorex3} and \ref{fig:anchorex4} show two sets that are not anchored and in fact are $\chi$-bounded\footnote{We leave this statement without a proof, as we are unable to prove it for all non-anchored sets.}.
We do not know of any non-anchored arc-connected compact set the families of homothetic copies of which are not $\chi$-bounded.

\begin{figure}[t]
\begin{center}
\subfigure[\unskip]{\label{fig:anchorex1}\includegraphics[scale=.45]{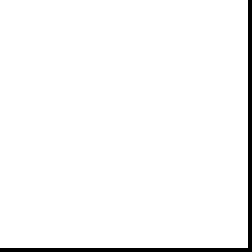}}
\hspace{0.5cm}
\subfigure[\unskip]{\label{fig:anchorex2}\includegraphics[scale=.45]{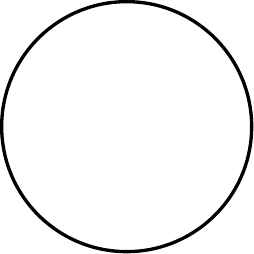}}
\hspace{0.5cm}
\subfigure[\unskip]{\label{fig:anchorex3}\includegraphics[scale=.45]{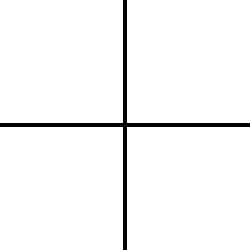}}
\hspace{0.5cm}
\subfigure[\unskip]{\label{fig:anchorex4}\includegraphics[scale=.45]{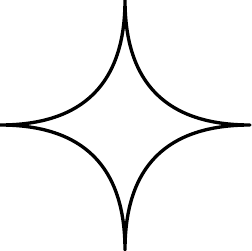}}
\end{center}
\caption{\subref{fig:anchorex1}, \subref{fig:anchorex2} anchored sets; \subref{fig:anchorex3}, \subref{fig:anchorex4} non-anchored sets}
\label{fig:anchorex}
\end{figure}

\begin{theorem}\label{thm:uniform}
For every anchored subset\/ $X$ of\/ $\RR^2$ and every integer\/ $k\geq 1$, there is a triangle-free family\/ $\cgF$ of homothetic copies of\/ $X$ in the plane such that\/ $\chi(\cgF)>k$.
\end{theorem}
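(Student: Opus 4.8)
The plan is to reprove Lemma~\ref{lem:other-induction} essentially line by line, with two systematic substitutions: every transformed copy of $X$ becomes a homothet, and every appeal to independent horizontal scaling is replaced by a suitable choice of the parameter $\epsilon$ from the anchored conditions \ref{us-cond:i}--\ref{us-cond:iv}. So I would prove the same statement --- for every $k$ a triangle-free family $\cgF(k)$ of $s_k$ homothets and a family $\cgP(k)$ of $p_k$ pairwise disjoint probes such that every proper coloring uses at least $k$ colors on the sets meeting some probe --- and then append one more layer of diagonals to pass from $\chi(\cgF)\geq k$ to $\chi(\cgF)>k$, exactly as in the remark after Lemma~\ref{lem:other-induction}. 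The probe conditions are kept verbatim; the only change is that the \emph{root} of a probe is now an $\epsilon$-empty square $E(\epsilon)$ (with $\epsilon$ fixed together with the probe), the left/right stabbers become the left/right $\epsilon$-stabbers, and the base case takes $\cgF(1)=\{X\}$ with the probe obtained by extending $E(\epsilon)$ horizontally to the right side of $U$, vertically stabbed by the right $\epsilon$-stabber. The two-step assembly (an outer copy of $\cgF(k-1)$, an inner copy of the helper family inside each root), the upper and lower probes $U_{P,Q}$ and $L_{P,Q}$, the triangle-freeness, and the color count are then identical to the proof of Lemma~\ref{lem:other-induction}, so the same recurrences yield the same $s_k$ and $p_k$.

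The one genuinely new point is the construction of a diagonal, since a homothet has a square bounding box and cannot be flattened onto a wide, short probe the way a freely rescaled copy can. Here I would use condition \ref{us-cond:ii} quantitatively: since $(1+\epsilon)\xi(\epsilon)<\epsilon$, the left side of $E(\epsilon)$ sits at horizontal coordinate $1-(1+\epsilon)\xi(\epsilon)>1-\epsilon$ and its right side lies within $\epsilon\xi(\epsilon)<\epsilon$ of the right side of $U$. Thus for small $\epsilon$ the left $\epsilon$-stabber spans all but an $\epsilon$-fraction of the width of the bounding square, while $E(\epsilon)$ is a narrow square hugging the right edge. Placing a single homothet with its left $\epsilon$-stabber at the height of $P^\uparrow$ therefore lets that stabber cross $P^\uparrow$ horizontally --- hence, by Fact~\ref{fact:curve-cross}, meet every vertical stabber of the sets piercing $P^\uparrow$, just as in Section~\ref{sec:independent-scaling} --- while its $\epsilon$-empty square protrudes to the right of the bounding box of $\cgF(k-1)$ and becomes the root of the upper probe. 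The openness of $U$ at the top and bottom is used to guarantee that the vertical $\epsilon$-stabbers run the full height of the rectangles $V_R(\epsilon)$ (and of the probes obtained from them), so that probe condition \ref{probe-cond:iii} keeps propagating through the nesting.

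The step I expect to be the main obstacle is ensuring that the square diagonal meets exactly the sets piercing $P^\uparrow$ and nothing else. A freely rescaled copy could be made an arbitrarily thin horizontal slab hugging $P^\uparrow$, so it automatically avoided everything outside the band of $P^\uparrow$; a homothet, by contrast, occupies a genuinely two-dimensional square neighbourhood of the probe, and one must check that no unintended set lies in that neighbourhood, simultaneously at every level of the recursion. I would control this by keeping the inner families small --- they live inside the roots, that is, inside $\epsilon$-empty squares, and are therefore scaled down by a large factor --- and by letting the parameters $\epsilon$ decrease quickly down the recursion, so that at the scale of any given diagonal every potentially obstructing set is confined to an arbitrarily small neighbourhood of the probe near the right edge, leaving room for the square diagonal to cross $P^\uparrow$ and to clear the corresponding sub-family with its empty square. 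Verifying that such a choice of the $\epsilon$'s is internally consistent is the real content of the argument; once it is fixed, the combinatorial skeleton and the color count are inherited unchanged from Lemma~\ref{lem:other-induction}.
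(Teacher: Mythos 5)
Your proposal correctly identifies where the difficulty lies, but it explicitly defers exactly the step that constitutes the paper's actual proof, and the concrete placement you do sketch would not work. The paper (Lemma \ref{lem:anchor-induction}) does not keep the probe conditions of Section \ref{sec:independent-scaling} verbatim: it redefines a probe to have a \emph{square} root and a fixed width-to-height ratio $1+\epsilon$, and it strengthens the induction to hold \emph{for all $\epsilon\in(0,1)$ simultaneously}. This is forced by homothety: the inner copy of the helper family must be placed inside a root by uniform scaling (so the root must be a square), and when an inner probe's root is later extended to the right side of the global bounding box, the resulting rectangle is only a valid $\epsilon$-probe if the distance from each root to the right boundary is at most an $\epsilon$-fraction of the root's width. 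The paper splits this budget explicitly ($\frac{\epsilon}{2}$ from the inner structure, $\frac{\epsilon}{2}$ from choosing the outer family at parameter $\frac{\epsilon t}{2s}$, where $s$ and $t$ are measured \emph{after} building the helper family at parameter $\frac{\epsilon}{8}$). Note that the parameter of the outer family depends on geometric measurements of the inner helper, so your plan of ``letting the $\epsilon$'s decrease quickly down the recursion'' as a pre-chosen sequence does not obviously suffice; the uniform-in-$\epsilon$ inductive statement is what makes the dependency legitimate. You never formulate this aspect-ratio invariant, and without it the extension step produces arbitrarily wide probes that no homothet can serve at the next level---precisely because homothets cannot be flattened.

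Your diagonal placement also fails as stated. You place the diagonal so that its left $\epsilon$-stabber crosses $P^\uparrow$ horizontally; but a homothet whose stabber spans the width of $P^\uparrow$ has a bounding square of side roughly the width of $P$, which then extends far outside $P$ into territory occupied by other members of the family, creating uncontrolled intersections---the very problem you flag but do not resolve. The paper instead abandons the $P^\uparrow/P^\downarrow$ split (which would anyway ruin the $1+\epsilon$ aspect ratio), splits the \emph{root} into four quadrants, places the diagonal with bounding square equal to the top-right quadrant (hence inside a region empty by definition), and shifts it right by $\frac{\epsilon}{8}s+m$ so that it protrudes past the bounding box of $\cgF_0$ by exactly $m$, with $\epsilon_1=\frac{2m}{s_{\max}}$ chosen uniformly so that the $\epsilon_1$-empty square of every diagonal clears the bounding box (this is where the quantitative conditions $(1+\epsilon)\xi(\epsilon)<\epsilon$ and the $\epsilon\xi(\epsilon)$ right-margin of condition \ref{us-cond:ii} are used). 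The diagonal's stabber need not cross all of $P^\uparrow$: it only has to cross the narrow band of $P$ to the right of the root, through which the vertical stabbers of the sets piercing $P$ pass, and the upper and lower probes are then grown inside the two new empty squares (the diagonal's $\epsilon_1$-empty square and the lower-right quadrant). Since you label verifying all of this ``the real content of the argument'' and leave it open, the proposal has a genuine gap rather than an alternative proof.
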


Let $\cgF$ be a family of homothetic copies of $X$ and $B$ be the bounding box of $\cgF$.
An axis-aligned rectangle $P$ is an \emph{$\epsilon$-probe} for $\cgF$ if the following conditions are satisfied:
\begin{enumerate}
\item $P$ is contained in $B$ and touches the right side of $B$.
\item The elements of $\cgF$ that intersect $P$ are pairwise disjoint.
\item Every set in $\cgF$ that intersects $P$ contains a curve that stabs $P$ vertically.
\item There is a vertical line cutting $P$ into two rectangles the left of which, called the \emph{root} of $P$, is a square disjoint from all the members of $\cgF$.
\item The ratio between the width and height of $P$ is equal to $1+\epsilon$.
\end{enumerate}

\begin{lemma}\label{lem:anchor-induction}
For every integer\/ $k\geq 1$ and every\/ $\epsilon\in(0,1)$, there is a triangle-free family\/ $\cgF(k,\epsilon)$ of\/ $s_k$ homothetic copies of\/ $X$ and a family\/ $\cgP(k,\epsilon)$ of\/ $p_k$ pairwise disjoint\/ $\epsilon$-probes for\/ $\cgF(k,\epsilon)$ such that every proper coloring of\/ $\cgF(k,\epsilon)$ uses at least\/ $k$ colors on the members of\/ $\cgF(k,\epsilon)$ intersecting some probe in\/ $\cgP(k,\epsilon)$.
\end{lemma}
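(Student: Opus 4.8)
The plan is to prove the lemma by induction on $k$, in exact parallel with the proof of Lemma~\ref{lem:other-induction}, replacing every use of independent horizontal and vertical scaling by a homothety and every use of the empty rectangle and the stabbers of $X$ by the $\epsilon$-empty square and the $\epsilon$-stabbers supplied by conditions \ref{us-cond:i}--\ref{us-cond:iv}. The combinatorial skeleton---a single set in the base case, and in the inductive step an outer copy of $\cgF(k-1,\cdot)$ together with inner copies of a helper family consisting of $\cgF(k-1,\cdot)$ with one diagonal added per probe, placed inside the square roots of the outer probes, yielding the upper and lower probes $U_{P,Q}$ and $L_{P,Q}$---will be kept verbatim, so that the counts $s_k$ and $p_k$, the triangle-freeness argument, and the final two-case coloring argument transfer without change. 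The only genuinely new content is geometric: every probe now carries an aspect ratio, required to be $1+\epsilon$, and a square root, and I must verify that both features are preserved by the construction.

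For the base case $k=1$ I would take $\cgF(1,\epsilon)=\{X\}$ and let $P$ be the horizontal extension of the $\epsilon$-empty square $E(\epsilon)$ to the right side of the bounding box of $X$, which by condition \ref{us-cond:iv} coincides with the right side of $U$. A direct computation from condition \ref{us-cond:ii} gives $P$ width $(1+\epsilon)\xi(\epsilon)$ and height $\xi(\epsilon)$, so its aspect ratio is exactly $1+\epsilon$ and its root $E(\epsilon)$ is a square; the right $\epsilon$-stabber stabs $P$ vertically by Fact~\ref{fact:curve-cut}. This is exactly where the normalizing distance $\epsilon\xi(\epsilon)$ from $E(\epsilon)$ to the right side of $U$ in condition \ref{us-cond:ii} is used; it is tailored so that this ratio comes out to $1+\epsilon$.

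For the inductive step I would fix the $\epsilon$-parameters of the outer and inner subfamilies (the natural first attempt being the same $\epsilon$ at every level) and place each diagonal $D_Q$ as a homothetic copy of $X$ whose right side is flush with the right side of the bounding box $B$ of $\cgF(k)$ and whose left $\epsilon$-stabber stabs $Q^\uparrow$ horizontally. Because the $\epsilon$-empty square of $X$ abuts the right side of $U$ at distance $\epsilon\xi(\epsilon)$ and has width $\xi(\epsilon)$ with $(1+\epsilon)\xi(\epsilon)<\epsilon$, the empty square of $D_Q$ is then a small square abutting the right side of $B$ and lying to the right of the inner family, so it can serve as the square root of $U_{P,Q}$; and the flush-right placement makes the aspect ratio of $U_{P,Q}$ equal $1+\epsilon$ by the same computation as in the base case, while Fact~\ref{fact:curve-cross} ensures the left $\epsilon$-stabber of $D_Q$ meets every set pierced by $Q^\uparrow$. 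The analogous check for the lower probe $L_{P,Q}$, built from $Q^\downarrow$, is the subtle point, since with only uniform scaling one no longer has the freedom to fix widths and heights independently.

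The hard part will be precisely this aspect-ratio bookkeeping: arranging the split of each probe into $Q^\uparrow$ (carrying the diagonal) and $Q^\downarrow$ (carrying the lower probe), the common scale of the diagonals, and the $\epsilon$-values of the nested subfamilies so that both $U_{P,Q}$ and $L_{P,Q}$ emerge as honest $\epsilon$-probes---contained in $B$, touching its right side, of aspect ratio exactly $1+\epsilon$, with a square root disjoint from the family. I expect this to force a careful, possibly $k$- or $\epsilon$-dependent, choice of the subfamily parameters, with the inequalities in condition \ref{us-cond:ii} doing the work that independent scaling did in Lemma~\ref{lem:other-induction}. Once the probes are certified, the remaining verifications---that the sets piercing each probe are pairwise disjoint and are stabbed vertically (via Facts~\ref{fact:curve-cut} and~\ref{fact:curve-cross}), that $\cgF(k,\epsilon)$ is triangle-free, and that the two-case argument on the upper and lower probes forces $k$ colors---are identical to the proof of Lemma~\ref{lem:other-induction}.
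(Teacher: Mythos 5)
Your skeleton and base case match the paper's, and you have located the difficulty in the right place; but the one concrete mechanism you commit to in the inductive step is exactly the step that fails, and the bookkeeping you defer is the actual content of the paper's proof. A diagonal cannot be placed ``flush with the right side of the bounding box $B$ of $\cgF(k,\epsilon)$'': a homothetic copy of $X$ has width equal to height, so a copy reaching from the root of an inner probe $Q$ all the way to the right side of $B$ would be taller than the entire inner family, would escape the probe vertically, and would hit members of the outer family and of neighboring inner families that it must avoid---ruining the pairwise disjointness of the sets piercing $U_{P,Q}$ and ultimately triangle-freeness. (This is precisely what the horizontal stretch by $\frac{2w_2}{w_1}$ bought in Lemma \ref{lem:other-induction} and what homothety forbids.) In the paper the diagonals never approach $B$: the helper family is built at parameter $\frac{\epsilon}{8}$, so the square root of each of its probes, of width $s$, lies at distance $\frac{\epsilon}{8}s$ from the right side of the helper's bounding box; the diagonal $D_P$ is the homothet whose bounding square fills the \emph{top-right quadrant of that root}, shifted right by $\frac{\epsilon}{8}s+m$ with $m=\frac{\epsilon}{8}s_{\min}$, so that every diagonal sticks out of that bounding box by the same tiny margin $m$; and the single parameter $\epsilon_1=\frac{2m}{s_{\max}}\leq\frac{\epsilon}{2}$ guarantees $(1+\epsilon_1)\xi(\epsilon_1)\cdot\frac{s}{2}<\epsilon_1\cdot\frac{s}{2}\leq m$, so each diagonal's $\epsilon_1$-empty square just clears the box while its left $\epsilon_1$-stabber still crosses every set piercing $P$ (Facts \ref{fact:curve-cut} and \ref{fact:curve-cross}).

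Consequently the ``same $\epsilon$ at every level'' option you float cannot be made to work, and the aspect ratio of the new probes does not come out exact ``by the same computation as in the base case''. The upper root (the $\epsilon_1$-empty square of $D_P$) and the lower root (the bottom-right quadrant of the square root of $P$---note that your $Q^\downarrow$ extended rightward carries no square root, which is why the quadrant split of the root replaces the $P^\uparrow/P^\downarrow$ split of Lemma \ref{lem:other-induction}) each lie within a $\frac{\epsilon}{2}$ fraction of their width from the right side of the helper's bounding box. The outer copy of $\cgF(k-1,\cdot)$ is then taken at the geometry-dependent parameter $\frac{\epsilon t}{2s}$, where $s$ is the side of a square containing the helper family and $t$ the minimum size of an upper or lower root, so that after the inner copies are placed flush right inside the outer roots every upper or lower root is within a $\frac{\epsilon}{2}+\frac{\epsilon t}{2s}\cdot\frac{s}{t}=\epsilon$ fraction of its width from the right side of $B$; an $\epsilon$-probe is then obtained whose square root is \emph{contained in}, not equal to, that upper or lower root. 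This cascading of parameters---$\frac{\epsilon}{8}$, $\epsilon_1$, $\frac{\epsilon t}{2s}$---is the reason the lemma is quantified over all $\epsilon\in(0,1)$ (the induction consumes different parameters at each level), and it is exactly the part your proposal acknowledges but leaves open, so the proof does not close as written.
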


\begin{proof}
For every $\epsilon\in(0,1)$, set $\cgF(1,\epsilon)=\{X\}$.
Extend the $\epsilon$-empty square of $X$ to the right side of the bounding box of $X$ to form a probe for $\cgF(1,\epsilon)$, and let this probe be the only member of $\cgP(1,\epsilon)$.
The families $\cgF(1,\epsilon)$ and $\cgP(1,\epsilon)$ clearly satisfy the conditions of the lemma.

Now, let $k\geq 2$, and assume by induction that for every $\epsilon\in(0,1)$, the families $\cgF(k-1,\epsilon)$ and $\cgP(k-1,\epsilon)$ exist and satisfy the conditions of the lemma.
Fix any $\epsilon\in(0,1)$.
Let $\cgF_0=\cgF(k-1,\frac{\epsilon}{8})$ and $\cgP_0=\cgP(k-1,\frac{\epsilon}{8})$.
Let $s_{\min}$ and $s_{\max}$ be respectively the minimum and maximum width of the root of a probe in $\cgP_0$.
Let $m=\frac{\epsilon}{8}\cdot s_{\min}$ and $\epsilon_1=\frac{2m}{s_{\max}}$.
Note that $\epsilon_1\leq\frac{\epsilon}{2}$.
For each probe $P\in\cgP_0$, define a homothetic copy $D_P$ of $X$, called the \emph{diagonal} of $P$, as follows.
Let $s$ be the width of the root of $P$.
Thus $s_{\min}\leq s\leq s_{\max}$.
Split the root of $P$ into four equal-size quadrants (see Figure \ref{fig:induction}).
Within the top right quadrant, put a homothetic copy of $X$ so that its bounding square fills the quadrant, and then shift it to the right by $\frac{\epsilon}{8}\cdot s+m$.
The resulting set is $D_P$.
This way, each diagonal sticks out of the bounding box of $\cgF_0$ by exactly $m$.
The $\epsilon_1$-empty square of $D_P$ has width $\xi(\epsilon_1)\cdot\frac{s}{2}$.
Since $(1+\epsilon_1)\xi(\epsilon_1)\cdot\frac{s}{2}<\epsilon_1\cdot\frac{s}{2}\leq m$, the $\epsilon_1$-empty square of $D_P$ lies entirely to the right of the bounding box of $\cgF_0$.
Since $\frac{\epsilon}{8}\cdot s+m\leq\tfrac{\epsilon}{2}\cdot\frac{s}{2}$, the root of $P$ and the diagonal $D_P$ have non-empty intersection.
Consequently, the left $\epsilon_1$-stabber of $D_P$ (and thus $D_P$ itself) intersects all the members of $\cgF_0$ intersecting $P$.
Define $\cgF'$ to be the family $\cgF_0$ together with the diagonals $D_P$ for all probes $P\in\cgP_0$.

\begin{figure}[t]
\begin{center}
\includegraphics[scale=.55]{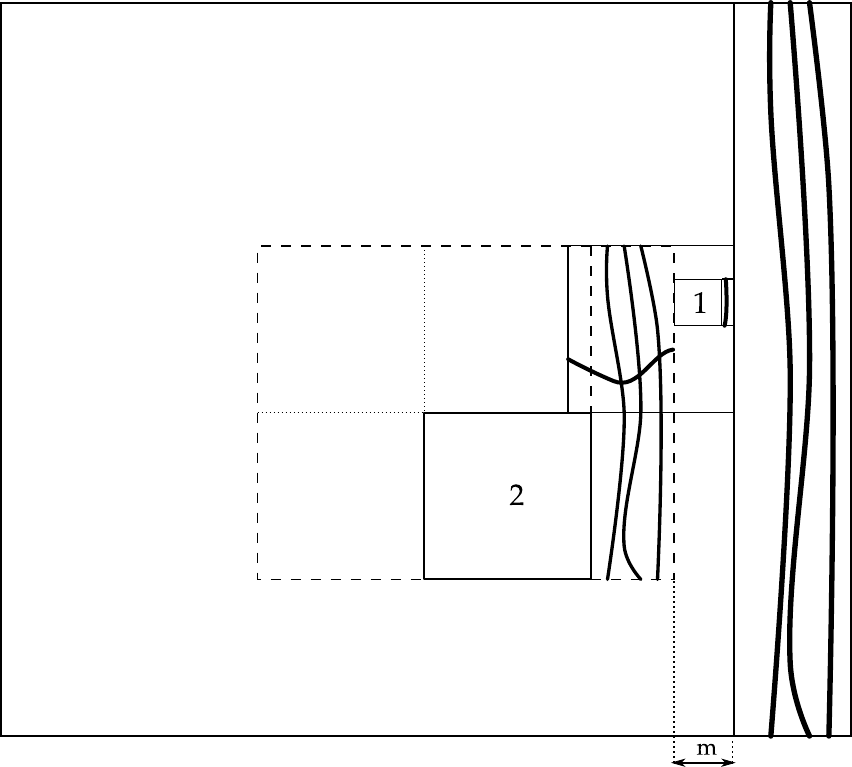}
\end{center}
\caption{The diagonal of an $\epsilon$-probe $P$ placed inside the root of an outer probe.
The squares marked 1 and 2 are the upper and lower root.}
\label{fig:induction}
\end{figure}

Now, for each probe $P\in\cgP_0$, define two squares:\ the \emph{upper root} and the \emph{lower root}.
The upper root of $P$ is the $\epsilon_1$-empty square of $D_P$.
The lower root of $P$ is the lower right quadrant of the root of $P$.
The upper and lower roots of $P$ are disjoint from all the sets in $\cgF'$.
The distance of the lower root to the right side of the bounding box of $\cgF'$ is $\frac{\epsilon}{8}\cdot s+m$, which is at most $\frac{\epsilon}{2}\cdot\frac{s}{2}$.
Therefore, the distance of each upper or lower root to the right side of the bounding box of $\cgF'$ is at most the $\frac{\epsilon}{2}$ fraction of its width.

We construct $\cgF(k,\epsilon)$ as follows.
Let $s$ be the minimum width of a square containing $\cgF'$, and let $t$ be the minimum size of a lower or upper root.
Let $\cgF_{\text{outer}}=\cgF(k,\frac{\epsilon t}{2s})$ and $\cgP_{\text{outer}}=\cgP(k,\frac{\epsilon t}{2s})$.
Then, for each $P\in\cgP_{\text{outer}}$, with root of width $r$, define $\cgF_P$ to be a homothetic copy of $\cgF'$ scaled by $\frac{r}{s}$ and placed inside the root of $P$ so that the bounding box of $\cgF_P$ touches the right side of the root.
Define $\cgF(k,\epsilon)$ to be the union of $\cgF_{\text{outer}}$ and $\cgF_P$ for all $P\in\cgP_{\text{outer}}$.

Now, we show how $\cgP(k,\epsilon)$ is constructed.
Consider the upper or lower root $R$ of any $\cgF_P$ with $P\in\cgP_{\text{outer}}$.
The ratio between the distance of $R$ to the right side of the bounding box of $\cgF(k,\epsilon)$ and the width of $R$ is at most $\frac{\epsilon}{2}+\frac{\epsilon t}{2s}\cdot\frac{s}{t}=\epsilon$.
Therefore, there is an $\epsilon$-probe for $\cgF(k,\epsilon)$ whose root is contained in $R$.
Choose one such probe for each upper or lower root $R$ of each $\cgF_P$ to form $\cgP(k,\epsilon)$.

That $\cgF(k,\epsilon)$ and $\cgP(k,\epsilon)$ satisfy the requirements of the lemma is proved the same way as in the proof of Lemma \ref{lem:other-induction}.
\end{proof}

\section{On-line coloring of overlap graphs}\label{sec:online}

Recall that an overlap graph is a graph whose vertices are intervals in $\RR$ and edges are the pairs of intervals that intersect but are not nested.
To provide a different insight into our construction from the previous sections, we show how it arises from a strategy for an on-line coloring game played on triangle-free overlap graphs.

The game is played by two players:\ Presenter and Painter.
Presenter presents intervals one at a time, according to the following two restrictions:
\begin{enumerate}
\item the intervals are presented in the increasing order of their left endpoints;
\item the overlap graph represented by all presented intervals is triangle-free.
\end{enumerate}
Right after an interval is presented, Painter has to label it with a color distinct from the colors of its neighbors in the presented overlap graph.
In other words, Painter produces a proper coloring of the graph.
The color once assigned to a vertex cannot be changed later on.

\begin{theorem}\label{thm:online}
For every integer\/ $k\geq 1$, Presenter has a strategy that forces Painter to use more than\/ $k$ colors in the on-line coloring game on triangle-free overlap graphs.
\end{theorem}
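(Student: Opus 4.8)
The plan is to mirror the geometric induction of Lemma~\ref{lem:other-induction} in the on-line setting, reading the last coordinate of each interval (its right endpoint) as the analogue of the horizontal position of a probe, and reading nesting/overlap as the analogue of vertical stabbing. I would describe Presenter's strategy recursively: let $\mathtt{Strat}(k)$ be a strategy that, whenever it is invoked on a fresh sub-interval $I$ of the line (all intervals it presents will have both endpoints inside $I$), forces Painter to use at least $k$ distinct colors on some identifiable subfamily of overlapping intervals, and moreover it designates a bounded family of $p_k$ pairwise disjoint \emph{target windows} near the right end of $I$, each window being a short sub-interval such that every interval Painter has been forced to spend a color on overlaps the window but is not nested in it. These windows play exactly the role of probes: an interval presented later that is nested inside a window will overlap none of the recorded intervals, whereas one that starts inside the window and extends to the right will overlap all of them.

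First I would set up the base case $\mathtt{Strat}(1)$: Presenter plays a single interval and declares one target window to be a tiny sub-interval lying strictly inside it and touching its right portion, so the single colored interval overlaps the window. Then for the inductive step, assuming $\mathtt{Strat}(k-1)$, I would run one ``outer'' copy of $\mathtt{Strat}(k-1)$ on $I$, obtaining a window $P$ on which $k-1$ colors are forced. Inside the root (left sub-window) of $P$ I would run an ``inner'' copy of $\mathtt{Strat}(k-1)$, which is nested inside $P$ and hence overlaps none of the outer intervals, again forcing $k-1$ colors on some inner window $Q$. Finally, for each such $(P,Q)$ Presenter plays a \emph{diagonal} interval $D$ that starts inside $Q$ and stretches rightward just past all the outer intervals: by the ordering and nesting relations, $D$ overlaps every inner interval recorded at $Q$ but avoids being nested in any of them, and it is nested inside the outer intervals recorded at $P$ — so $D$ overlaps the inner colored set and is disjoint (in the overlap graph) from the outer colored set. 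The two new target windows are the on-line analogues of $U_{P,Q}$ and $L_{P,Q}$: an \emph{upper} window placed just to the right of $D$'s left end but after the inner family, overlapping $D$ together with the outer intervals, and a \emph{lower} window inside $Q$ overlapping both colored families but disjoint from $D$.

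The colour-counting argument then runs exactly as in Lemma~\ref{lem:other-induction}. Painter has used $\geq k-1$ colours on the outer set at $P$ and $\geq k-1$ colours on the inner set at $Q$. If these two colour sets differ, the lower window sees both families and hence $\geq k$ colours; if they coincide, then $D$ — which overlaps the inner family but not the outer — must receive a fresh colour, and the upper window, seeing $D$ together with the outer family, again witnesses $\geq k$ colours. Crucially, Presenter never creates a triangle: the outer and inner families are overlap-disjoint, each diagonal overlaps only an independent (pairwise non-overlapping) inner set, and the inductive families are themselves triangle-free; this matches the triangle-freeness verification in the geometric proof.

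The main obstacle, and the part that needs genuine care rather than a routine transcription, is ensuring that Presenter can realize all of these nesting and overlap constraints \emph{while respecting the left-to-right ordering restriction} on left endpoints. Unlike the geometric construction, where sets may be placed anywhere, here every new interval must have a left endpoint to the right of all previously presented ones. The recursive layout must therefore be arranged so that the inner copy, and then each diagonal, are presented after the entire outer family, with their left endpoints suitably ordered; one must check that a diagonal starting inside $Q$ can still have its left endpoint lie to the right of everything presented before it, which forces the windows to be spaced with explicit gaps (the exact on-line counterpart of the ``vertical margins'' and the scale factor $\tfrac{2w_2}{w_1}$ in Lemma~\ref{lem:other-induction}). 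Making these orderings simultaneously consistent for all $p_{k-1}^2$ window pairs is the one place where I would have to be fully explicit about the geometry of the interval endpoints; everything else follows the structure already established.
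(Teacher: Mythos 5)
Your plan has a structural flaw that is not a matter of missing detail: as specified, the strategy is essentially \emph{non-adaptive} --- you run the outer copy, place an inner copy in each root, and play a diagonal for each of the $p_{k-1}^2$ window pairs, carrying $p_k$ pairwise disjoint windows in the invariant irrespective of Painter's responses. No such strategy can succeed. A non-adaptive Presenter commits to a fixed triangle-free family of intervals presented in a fixed order; by Gy\'{a}rf\'{a}s's theorem quoted in the introduction, triangle-free overlap graphs have chromatic number bounded by an absolute constant, so Painter may precompute an optimal proper coloring of the final family and answer according to it, never exceeding that constant. The geometric intuition behind the failure is the same: the $p_k$ disjoint probes of Lemma \ref{lem:other-induction} all touch the right side of the bounding box \emph{at different heights}, and this second dimension has no counterpart on the line. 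Two disjoint windows near the right end of $I$ cannot each carry their own nested stabbing chain without interference: a diagonal started inside the left window and stretched rightward enters the family seen by every window to its right and destroys the nestedness there --- and nestedness is exactly what keeps each later diagonal's neighborhood independent, i.e.\ triangle-safe. In the paper the multiplicity of probes is replaced by \emph{adaptivity}: the branches of Presenter's strategy tree are the ``heights,'' as the encoding into rectangular frames at the end of Section \ref{sec:online} makes explicit.

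The repair is the paper's Lemma \ref{lem:online-induction}, and your phrase ``obtaining a window $P$ on which $k-1$ colors are forced'' is the right instinct, but pushed to its conclusion it collapses the entire $p_k$-window bookkeeping: one maintains a \emph{single} witness point $x$ with $\cgI(x)$ a nested chain carrying $k$ colors. Presenter plays $\Sigma(k-1,R)$, then --- adaptively, having seen Painter's colors --- recurses exactly once into $R'=\bigcap\cgI(x)\cap[x,\infty)$ (the one root that matters), and plays the single diagonal $J$ only in the branch where the two color sets coincide; in the complementary branch $(\cgI\cup\cgI')(x')$ already exhibits $k$ colors and no diagonal is needed. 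With one window per level, the left-endpoint ordering difficulty you flag as the main obstacle dissolves rather than requiring explicit gap engineering: every new interval lies in the interior of a region to the right of the previous witness point and inside the current nested chain, so left endpoints increase automatically. Note also a local inconsistency in your diagonal: it cannot ``stretch rightward just past all the \emph{outer} intervals'' and simultaneously be nested inside them; like the paper's $J$, it must end past the \emph{inner} family $\cgI'(x')$ but strictly inside $\bigcap\cgI(x)$. The adaptive version is also far shorter: two recursive calls plus one interval per level, roughly $2^k$ intervals per branch, versus the doubly exponential size $s_k$ forced on the non-adaptive geometric construction.
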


As before, we derive Theorem \ref{thm:online} from a stronger statement that admits an inductive proof.
For a family $\cgI$ of intervals and a point $x\in\RR$, let $\cgI(x)$ denote the subfamily of $\cgI$ consisting of the intervals that have non-empty intersection with $[x,\infty)$.

\begin{lemma}\label{lem:online-induction}
Let\/ $k\geq 1$ and\/ $R$ be an interval of positive length.
In the on-line coloring game on triangle-free overlap graphs, Presenter has a strategy\/ $\Sigma(k,R)$ to construct a family of intervals\/ $\cgI$ together with a point\/ $x\in R$ that satisfy the following conditions:
\begin{enumerate}
\item every interval in\/ $\cgI$ is contained in the interior of\/ $R$,
\item the family\/ $\cgI(x)$ is a nested chain of intervals,
\item Painter is forced to use at least\/ $k$ colors on the intervals in\/ $\cgI(x)$.
\end{enumerate}
\end{lemma}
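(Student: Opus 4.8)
The plan is to prove this by induction on $k$, following the same outer/inner/diagonal scheme as Lemma~\ref{lem:other-induction}, while exploiting that Presenter is \emph{adaptive}: since the coloring is revealed during play, the diagonal needs to be presented in only one of the two cases, which is what makes the on-line argument cleaner than its geometric counterpart. Throughout I would maintain three invariants on the output $(\cgI,x)$ in addition to those stated: that $x$ lies in the interior of every interval of the nested chain $\cgI(x)$, that every interval of $\cgI$ has left endpoint strictly less than $x$, and that every interval outside $\cgI(x)$ has right endpoint strictly less than $x$. For the base case $k=1$ I would present a single interval $I\subset\interior R$ and take $x$ to be an interior point of $I$; then $\cgI(x)=\{I\}$ is a one-element chain on which one color is used, and all invariants hold trivially.

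For the inductive step with $k\ge 2$, fix $R$ and first run the outer strategy $\Sigma(k-1,R_o)$ on a subinterval $R_o\subset R$, obtaining a family $\cgI_o$ and a nested chain $A=\cgI_o(x_A)$ with color set $C_A$, $|C_A|\ge k-1$, whose innermost interval $A_1=[\alpha_1,\beta_1]$ contains $x_A$ in its interior. Using the invariant that all outer left endpoints are $<x_A$, I would then run the inner strategy $\Sigma(k-1,R_i)$ on a subinterval $R_i\subset(x_A,\beta_1)\subset A_1$, so that every inner interval starts to the right of every outer interval (making the global order of left endpoints legal) and lies inside every member of $A$ (so the two chains are mutually nested, hence non-adjacent in the overlap graph). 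This produces $\cgI_i$ and a nested chain $B=\cgI_i(x_B)$ with color set $C_B$, $|C_B|\ge k-1$, innermost interval $B_1=[\gamma_1,\delta_{\min}]\ni x_B$, placed so that $\delta_{\min}>x_A$; let $\delta_{\max}$ be the largest right endpoint occurring in $B$.

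Now I would branch on the colors, which are already known at this point. If $C_A\ne C_B$, I present nothing more: since both sets have size $\ge k-1$, their being distinct forces $|C_A\cup C_B|\ge k$, and any $x\in(\max(x_A,x_B),\delta_{\min})$ gives $\cgI(x)=A\cup B$, a nested chain (as $B\subset A_1$) carrying $\ge k$ colors. If instead $C_A=C_B$, I present one diagonal $D=[a_D,b_D]$ with $a_D\in(x_B,\delta_{\min})$ and $b_D\in(\delta_{\max},\beta_1)$. Such a $D$ is nested inside every member of $A$, hence non-adjacent to $A$, while it overlaps every interval of $B$ and so forces Painter to avoid $C_B=C_A$ when coloring it; choosing $x\in(\delta_{\max},b_D)$ then yields $\cgI(x)=A\cup\{D\}$, again a nested chain with $\ge k$ colors. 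In either branch one checks directly, from the placements above, that the three invariants are restored for the new pair $(\cgI,x)$.

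The two delicate points are \emph{triangle-freeness} and the verification that $D$ meets only an independent set. Because $a_D>x_B$, any interval overlapping $D$ contains a point to the right of $x_B$, so by the inner invariant it reaches past $x_B$ and therefore belongs to the nested chain $B$; since $B$ is an independent set in the overlap graph and $D$ is non-adjacent to $A$, adjoining $D$ creates only a star and no triangle, and the same $a_D>x_A>\beta$ inequality rules out any overlap between $D$ and the non-chain outer intervals. I expect the main obstacle to be exactly this endpoint bookkeeping---pinning down the inequalities so that (i) the presentation stays in increasing order of left endpoints, (ii) the chosen point $x$ isolates precisely the intended chain, and (iii) the strengthened invariants propagate through both branches---rather than any single conceptual hurdle; the essential idea is that adaptivity lets Presenter commit to $A\cup B$ or to $A\cup\{D\}$ only after learning whether the two color sets coincide, which is what dissolves the otherwise unavoidable positional conflict between keeping $B$ and discarding $D$.
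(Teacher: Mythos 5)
Your proposal is correct and follows essentially the same route as the paper's proof: induction on $k$, running $\Sigma(k-1)$ a second time inside the right portion $\bigcap\cgI(x)\cap[x,\infty)$ of the innermost chain interval, then branching adaptively on whether the two color sets coincide and, if so, adding a single ``diagonal'' interval that overlaps the inner chain (an independent set, so no triangle arises) while being nested in the outer chain. The only difference is presentational: you make explicit, as strengthened induction invariants, the endpoint conditions (e.g.\ that $x$ lies in the interior of the innermost chain member and that all left endpoints are strictly less than $x$) which the paper's construction maintains implicitly.
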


\begin{proof}
In the strategy $\Sigma(1,R)$, Presenter just puts a single interval $I$ in the interior of $R$.
Clearly, the family $\cgI=\{I\}$ together with an arbitrary point $x\in I$ satisfy the conditions of the lemma.

Now, let $k\geq 2$, and assume by induction that the strategy $\Sigma(k-1,R)$ exists and satisfies the conditions of the lemma for every $R$.
Fix an interval $R$ of positive length.
We describe the strategy $\Sigma(k,R)$.

\begin{figure}[t]
\begin{center}
\includegraphics[scale=1.1]{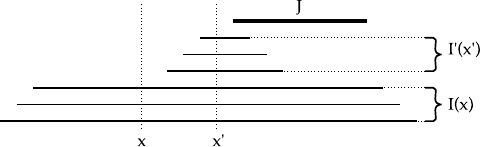}
\end{center}
\caption{The main idea of Presenter's strategy}
\label{fig:game-diagonal}
\end{figure}

First, Presenter plays according to the strategy $\Sigma(k-1,R)$ constructing a family $\cgI$ together with a point $x\in R$ such that $\cgI(x)$ is nested and Painter is forced to use at least $k-1$ colors on the intervals in $\cgI(x)$.
Let $R'=\bigcap\cgI(x)\cap[x,\infty)$.
Presenter continues with the strategy $\Sigma(k-1,R')$.
Thus another family $\cgI'$ is presented for which there is a coordinate $x'\in R'$ such that $\cgI'(x')$ is nested and Painter is forced to use at least $k-1$ colors on the intervals in $\cgI'(x')$.
If the sets of colors used by Painter on $\cgI(x)$ and $\cgI'(x')$ differ, then Painter has used at least $k$ colors on the family $\cgI(x)\cup\cgI'(x')$, which is equal to $(\cgI\cup\cgI')(x')$.
So suppose otherwise, that Painter has used the same sets of colors on $\cgI(x)$ and $\cgI'(x')$.
Presenter puts an interval $J$ contained in the interior of $\bigcap\cgI(x)\cap[x',\infty)$ and overlapping all intervals in $\cgI'(x')$ (see Figure \ref{fig:game-diagonal}).
The overlap graph remains triangle-free, as the family of neighbors of $J$, which is $\cgI'(x')$, is an independent set.
The family $\cgI(x)\cup\{J\}$ is nested, and we have $\cgI(x)\cup\{J\}=(\cgI\cup\cgI'\cup\{J\})(y)$ for any point $y$ in the interior of $J$ and to the right of all intervals in $\cgI'(x')$.
Painter has to color $J$ with a color distinct from those of the intervals in $\cgI'(x')$ and thus is forced to use at least $k$ colors on $\cgI(x)\cup\{J\}$.
The presented family $\cgI\cup\cgI'\cup\{J\}$ together with the point $y$ witness that the goal of the strategy $\Sigma(k,R)$ has been achieved.
\end{proof}

To obtain the smallest (shortest) strategy $\tilde\Sigma_k$ satisfying Theorem \ref{thm:online} that we know, we follow the strategy $\Sigma(k,R)$ constructed above and finally put one additional interval inside $[x,\infty)$ that overlaps all intervals in $\cgI(x)$.

Now we describe how to `encode' a strategy $\Sigma$ claimed by Theorem \ref{thm:online} into a triangle-free family $\cgF$ of axis-aligned rectangular frames with chromatic number greater than $k$.
This gives an alternative proof of the special case of Theorem \ref{thm:independent-scaling} for rectangular frames.
The idea is shown in Figure \ref{fig:game}.

For every interval $I$ occurring in any branch of the strategy $\Sigma$, define a rectangular frame $F(I)$ and insert it into $\cgF$ as follows.
For the very first interval $I_0$ presented by $\Sigma$, choose any $c,d\in\RR$ so that $c<d$, and define $F(I_0)$ to be the boundary of $I_0\times[c,d]$.
Now, let $I$ be an interval such that $F(I)$ is already defined as the boundary of $I\times[c_I,d_I]$.
Let $I_1,\ldots,I_r$ be the possible choices of the next interval put by $\Sigma$ after $I$, depending on the colors that Painter has chosen for $I$ and all preceding intervals in this branch of the strategy tree.
Choose $c_1,\ldots,c_r$ and $d_1,\ldots,d_r$ so that $c_I<c_1<d_1<\cdots<c_r<d_r<d_I$, and define $F(I_i)$ to be the boundary of $I_i\times[c_i,d_i]$ for $1\leq i\leq r$.
Repeat this procedure until every interval from every branch of $\Sigma$ has its rectangular frame defined.

\begin{figure}[t]
\begin{center}
\includegraphics[scale=1.1]{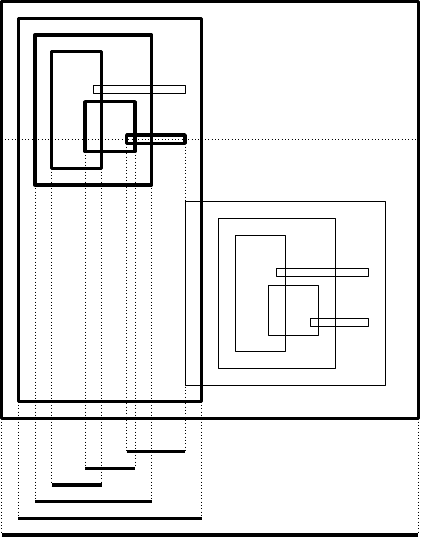}
\end{center}
\caption{A family of rectangular frames with $\chi=3$ and a branch of Presenter's strategy corresponding to the set of rectangles pierced by a horizontal line}
\label{fig:game}
\end{figure}

It is not difficult to see that any two rectangular frames $F(I),F(J)\in\cgF$ intersect if and only if the intervals $I$ and $J$ overlap and belong to a common branch in $\Sigma$.
In particular, any clique in $\cgF$ corresponds to a clique in the overlap graph presented in some branch of $\Sigma$.
Since $\Sigma$ always produces a triangle-free graph, the family $\cgF$ is triangle-free as well.
If $\chi(\cgF)\leq k$, then Painter could fix some $k$-coloring of $\cgF$ and color incoming intervals with the colors of the corresponding frames.
This way Painter would use at most $k$ colors of the intervals presented by $\Sigma$, which is a contradiction.
Hence $\chi(\cgF)>k$.

It should be noted that the proof of Lemma \ref{lem:other-induction} (for $X$ a rectangular frame) and the proof of Lemma \ref{lem:online-induction} together with the `encoding' described above provide equivalent descriptions of the same construction of triangle-free families of rectangular frames with large chromatic number.

\section{Problems}\label{sec:problems}

We already posed several problems concerning the chromatic number and independence number of triangle-free segment intersection graphs in \cite{PKK+14}.
Here we focus on classification of shapes with respect to whether triangle-free families of copies of the shape under particular transformations have bounded or unbounded chromatic number.

\begin{problem}
Provide a full characterization of compact arc-connected subsets $X$ of $\RR^2$ such that triangle-free families of homothetic copies of $X$ in the plane have bounded chromatic number.
\end{problem}

\begin{problem}
Is there a compact arc-connected subset $X$ of $\RR^2$ such that triangle-free families of translated copies of $X$ in the plane have unbounded chromatic number?
\end{problem}

\bibliographystyle{plain}
\bibliography{segments}

\end{document}